\theoremstyle{plain}
\newtheorem{THEOREM}{Theorem}[section]
\newtheorem{lemma}[THEOREM]{Lemma}
\newtheorem{proposition}[THEOREM]{Proposition}
\theoremstyle{definition}
\theoremstyle{remark}
\newcommand{\Z}{\ensuremath{\mathbb{Z}}}   
\newcommand{\R}{\ensuremath{\mathbb{R}}}   
\newcommand{\T}{\ensuremath{\mathbb{T}}}   
\def \a {\alpha}
\def \d {\delta}
\def \l {\lambda}
\def \n {\nabla}
\def \ex {\vec{e}_1}
\def \ey {\vec{e}_2}
\def \< {\langle}
\def \> {\rangle}
\def \p {\partial}
\def \ra {\rightarrow}
\newcommand{\der}[2]{(#1 \cdot \nabla) #2}
\newcommand{\tri}[3]{#1 \otimes #2 : \n #3}
\begin{document}

\title[Ill-posedness in Besov spaces]{Ill-posedness for subcritical hyperdissipative Navier-Stokes equations
in the largest critical spaces}

\author{A. Cheskidov}
\thanks{The work of A. Cheskidov is partially supported by NSF grant DMS--1108864}
\address[A. Cheskidov and R. Shvydkoy]
{Department of Mathematics, Stat. and Comp. Sci.\\
M/C 249,\\
      University of Illinois\\
      Chicago, IL 60607}
\email{acheskid@math.uic.edu}

\author{R. Shvydkoy}
\thanks{The work of R. Shvydkoy was partially supported by NSF grants
DMS--0907812 and DMS--1210896}

\email{shvydkoy@math.uic.edu}

\begin{abstract}
We study the incompressible Navier-Stokes equations with a fractional Laplacian and prove
the existence of discontinuous Leray-Hopf solutions in the largest critical space with arbitrarily small initial data.
\end{abstract}

\keywords{Navier-Stokes equation, ill-posedness, Besov spaces}
\subjclass[2000]{Primary: 76D03 ; Secondary: 35Q30}

\maketitle
\section{Introduction}
In this paper we study the supercritical 3D Navier-Stokes equations with a fractional power of the Laplacian
\begin{equation} \label{NSE}
\left\{
\begin{aligned}
&\p_t u  + (u \cdot \nabla)u + \nabla p = - \nu  (-\Delta)^{\alpha} u , \qquad x \in \T^3, t\geq 0,\\
&\nabla \cdot u =0,\\
&u(0)=u_0,
\end{aligned}
\right.
\end{equation}
where the velocity $u(x,t)$ and the pressure $p(x,t)$ are unknowns,
$u_0 \in L^2(\T^3)$ is the initial condition,
$\nu>0$ is the kinematic  viscosity coefficient of the fluid, and $\alpha>0$. The case $\alpha=1$ corresponds 
to the classical Navier-Stokes equations, which has been studied extensively for decades. We refer to \cite{CF, temam}
for the classical theory for these equations. In the case $\alpha\geq5/4$ the equations are well-posed, as the dissipative term simply dominates the nonlinear
term. Moreover, the global regularity is known even in a slightly supercritical
case, i.e., when logarithmic corrections to the Fourier multiplier
of the dissipative term are present (see \cite{tao,cs-kolm}). However, a finite time blow up of solutions to \eqref{NSE} remains a possibility for $\alpha<5/4$ due
to a supercritical nature of the equations. Nevertheless,
a partial regularity result \cite{CKN} has been established in the supercritical case $\alpha=1$, later extended
to $\alpha\in(1,5/4)$ in \cite{KP}. There are also various regularity criteria in the case $\alpha=1$, most of
which are of Ladyzhenskaya-Prodi-Serrin type \cite{foias-reg, ladyzhenskaya, prodi, serrin, ESS, cs-reg, cs-kolm},
 which can also be extended to $\alpha\in(1,5/4)$.

One of the open questions studied extensively is whether solutions bounded
in the largest critical case ($\dot{B}^{-1}_{\infty,\infty}$ for $\alpha=1$) are regular.
A positive answer to this question would extend the famous $L_t^\infty L_x^3$
result due to Iscauriaza, Seregin, and \v{S}ver\'ak \cite{ESS}. In addition,
the best small initial result for the 3D NSE, due to Koch and Tataru \cite{KT},
is in the space $BMO^{-1}$, and it is not known either if its extension to the
$B^{-1}_{\infty,\infty}$ is possible.

In view of these problems two ``negative'' results have been obtained in the space $\dot{B}^{-1}_{\infty,\infty}$. First, Bourgain and Pavlovic \cite{bp} proved that that there are solutions
to the 3D NSE equations, with arbitrary small initial data in $\dot{B}^{-1}_{\infty,\infty}$ that become arbitrarily large in $\dot{B}^{-1}_{\infty,\infty}$ in arbitrarily
small time. Second, Leray-Hopf solutions with arbitrary small initial data, but discontinuous in $B^{-1}_{\infty,\infty}$ were obtained in \cite{cs-ill}.

The largest critical space for the fractional NSE \eqref{NSE} is $\dot{B}^{1-2\alpha}_{\infty,\infty}$.
Recently Yu and Zhai \cite{YZ} obtained a small initial data result in this space
in the hypodissipative case $\alpha \in (1/2, 1)$. Heuristically, the hypodissipative
NSE behaves better because it is closer to the fractional heat semigroup
in critical spaces. In the hyperdissipative case it is therefore natural to expect ill-posedness results of the type mentioned above. Indeed, in this paper we demonstrate this in the case $\a \in [1,5/4)$ by constructing a Leray-Hopf solution with arbitrarily small initial data, which is discontinuous
in the critical Besov space $B^{1-2\alpha}_{\infty,\infty}$. It is thus a direct extension of our previous result stated in \cite{cs-ill}. The method breaks down either when $\a$ passes beyond the value of $1$, which is consistent with the result of Yu and Zhai, and at $5/4$ and beyond, which is consistent with the global regularity in that range.

We now fix our notation. We assume periodic boundary conditions in all 3 dimensions, so $\T^3$ will denote the 3D torus, while $|\cdot |_p$, $p\geq 1$, denotes the $L^p$-norm in $\T^3$. We let $\hat{f}$ and $\check{f}$ stand for the forward and, respectively, inverse Fourier transforms on the torus. The Fourier multiplier with symbol $|\xi|^\a$, where $\xi$ stands for the frequency vector and $\a >0$, is denoted by $|\n|^\a$. The fractional Laplacian operator $(-\Delta )^{\a}$ has symbol $|\xi|^{2\a}$. We write $p(\xi) =\mathrm{id} - |\xi|^{-2} \xi \otimes \xi$, $\xi \neq 0$, $p(0) = \mathrm{id}$, for the symbol of the Leray-Hopf projection on the divergence-free fields. We fix notation for the dyadic a-dimensional wavenumbers $\l_q = 2^q$. We use extensively the classical dyadic decomposition throughout: $u = \sum_{q \geq 0} u_q$, where $u_q$ is the Littlewood-Paley projection with the Fourier support contained in $\{ \l_{q-1} < |\xi| < \l_{q+1}\}$.  The definitions are standard and can be found in the references above. We often will be using the extended projection defined by $\tilde{u}_q = u_{q-1} + u_q + u_{q+1}$, $q \geq 1$, and projection onto the dyadic ball, $u_{\leq q} = \sum_{p=0}^q u_p$. Thus, $\tilde{u}_q$ is supported on $\{ \l_{q-2} < |\xi| < \l_{q+2}\}$ and we have the identity
\begin{equation}\label{extended}
\int_{\T^3} u \cdot u_q\ dx = \int_{\T^3} \tilde{u}_q \cdot u_q\ dx. 
\end{equation}
With the Littlewood-Paley decomposition we define Besov spaces $B^s_{r,\infty}$, $s \in \R$, $r \geq 1$ by requiring
\[
\| u\|_{B^s_{r,\infty}} = \sup_{q \geq 0} \l_q^{s} \| u_q\|_r <\infty.
\]
We will frequently refer to Bernstein's inequalities, which state that for all $1\leq r < r' \leq \infty$, and in three dimensions, one has
\[
|u_q |_{r'} \lesssim \l_q^{3(1/r - 1/r')} | u_q |_{r},
\]
where here and throughout $\lesssim$ denote inequality up to an absolute constant.
Finally, let $\ex$, $\ey$, etc., stand for the vectors of the standard unit basis.

\section{Ill-posedness of NSE}

\def \lq {\l_{q_j}}
\def \lqm { \l_{q_j - 1} }
\def \lqk { \l_{q_k} }

The Navier-Stokes equation with a fractional power of the Laplacian is given by
\begin{equation}\label{nse}
u_t + \der{u}{u} = -\nu (-\Delta)^{\alpha} u - \n p.
\end{equation}
Here $u$ is a three dimensional divergence free field on $\T^3$, and $\a\in[1,5/4)$. Let us recall that for every field $U\in L^2(\T^3)$ there exists a weak solution $u \in C_w([0,T);L^2) \cap L^2([0,T);H^1)$ to \eqref{nse} such that the energy inequality
\begin{equation}\label{enineq}
|u(t)|_2^2 + 2\nu \int_0^t ||\n |^{\alpha} u(s)|^2_2 ds \leq |U|_2^2,
\end{equation}
holds for all $t>0$ and $u(t) \ra U$ strongly in $L^2$ as $t\ra 0$. In what follows we do not actually use inequality \eqref{enineq} which allows
us to formulate a more general statement below in Proposition~\ref{p-nse}.

Let us choose a strictly increasing sequence $\{q_i\} \in \mathbb{N}$ with elements sufficiently far apart so that at least
$\l_{q_i}^{2\a}  \l_{q_{i+1}}^{4\a-5} < 1$. We consider the following lattice blocks:
\begin{align*}
A_j & = \left[\frac{9}{10} \lq, \frac{11}{10} \lq \right] \times \left[- \frac{1}{10} \lq, \frac{1}{10}\lq \right]^2 \cap \Z^3 \\
B_j & = \left[-\frac{1}{10} \lqm, \frac{1}{10} \lqm \right]^2 \times \left[\frac{9}{10} \lqm, \frac{11}{10} \lqm \right] \cap \Z^3\\
C_j& = A_j + B_j \\
A_j^*& = -A_j, \ B_j^* = - B_j, \ C_j^* = -C_j.
\end{align*}
Thus, $A_j$, $C_j$ and their conjugates lie in the $q_j$-th shell, while $B_j$, $B_j^*$ lie in the adjacent $(q_j - 1)$-th shell. The particular choice of scaling exponents $9/10,11/10$, etc., is unimportant as long as the blocks fit into the their respective shells. Let us denote
$$
\ex(\xi) = p(\xi) \ex, \quad \ey(\xi) = p(\xi) \ey.
$$
We now define the initial condition field to be the following sum
\begin{equation}
U = \sum_{j \geq 1} (U_{q_j} + U_{q_j - 1} ),
\end{equation}
where the components, on the Fourier side, are
$$
\widehat{U_{q_j}}(\xi) =\lq^{2\alpha-4} \left( \ey(\xi) \chi_{A_j \cup A_j^*} + i (\ey(\xi) - \ex(\xi))\chi_{C_j} - i (\ey(\xi) - \ex(\xi))\chi_{C_j^*}    \right),
$$
and
$$
\widehat{U_{q_j - 1}}(\xi) = \lq^{2\alpha-4}  \ex(\xi) \chi_{B_j \cup B_j^*}.
$$
By construction, $\hat{U}(-\xi) = \overline{\hat{U}(\xi)}$, which ensures that $U$ is real. Since $U$ has no modes in the $(q_j + 1)$-st shell, then the extended Littlewood-Paley projection of the $j$-th component has the form
$\tilde{U}_{q_j}= U_{q_j - 1} + U_{q_j}$.

\begin{lemma}\label{ubesov}
We have $U \in B^{1+\frac{3}{r}-2\a}_{r,\infty}$, for any $1<r\leq \infty$.
\end{lemma}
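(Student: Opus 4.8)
The plan is to bound the $L^r$ norm of each block $U_{q_j},U_{q_j-1}$ and then read off the Besov norm from the definition $\|u\|_{B^s_{r,\infty}}=\sup_{q\ge0}\l_q^s\|u_q\|_r$. By construction $U_{q_j}$ is spectrally supported in the single shell $q_j$ and $U_{q_j-1}$ in the shell $q_j-1$; since $\{q_j\}$ is lacunary these shells are pairwise disjoint across $j$, so for each $q$ the $q$-th Littlewood--Paley piece of $U$ is the projection of a single pair $U_{q_j}+U_{q_j-1}$ with $\l_q\sim\lq$, all other blocks being spectrally disjoint from the $q$-th shell. As the Littlewood--Paley projections are bounded on $L^r$ uniformly in $q$, the lemma reduces to the uniform bound
\[
\|U_{q_j}\|_r + \|U_{q_j-1}\|_r \lesssim \lq^{\,2\a-1-3/r},
\]
since then $\l_q^{1+3/r-2\a}\|u_q\|_r\lesssim\lq^{\,1+3/r-2\a}\cdot\lq^{\,2\a-1-3/r}=1$ uniformly in $q$.

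To prove this estimate I would first isolate the Leray projection. Writing $\ex(\xi)=p(\xi)\ex$ and $\ey(\xi)=p(\xi)\ey$, each component factors as $U_{q_j}=p(\n)V_{q_j}$ and $U_{q_j-1}=p(\n)V_{q_j-1}$, where $V_{q_j},V_{q_j-1}$ are the same expressions with the $\xi$-dependent vectors $\ex(\xi),\ey(\xi)$ replaced by the constant vectors $\ex,\ey$. Thus $V_{q_j}$ is a fixed finite combination of constant-coefficient trigonometric sums over the lattice boxes $A_j,A_j^*,C_j,C_j^*$, each with amplitude $\lq^{2\a-4}$, and likewise $V_{q_j-1}$ over $B_j,B_j^*$.

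For $1<r<\infty$ I would invoke the $L^r(\T^3)$ boundedness of $p(\n)$ -- it is a finite combination of Riesz transforms, with bound independent of scale -- to get $\|U_{q_j}\|_r\lesssim\|V_{q_j}\|_r$, and then estimate each box. Each of $A_j$, $C_j=A_j+B_j$ and $B_j$ (and their reflections) is a product of three integer intervals of side comparable to $\lq$ (the sumset of two integer boxes being again an integer box), so the corresponding trigonometric sum factors into three one-dimensional Dirichlet kernels. The classical bound $\|D_N\|_{L^r(\T)}\lesssim N^{1-1/r}$, valid precisely for $r>1$, then gives $L^r$ norm $\lesssim\lq^{3(1-1/r)}$ per box; multiplying by $\lq^{2\a-4}$ and summing the finitely many boxes yields $\|V_{q_j}\|_r\lesssim\lq^{\,2\a-1-3/r}$, as required. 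The endpoint $r=\infty$ is treated directly: since $|p(\xi)|\le1$, summing absolute values on the Fourier side gives $\|U_{q_j}\|_\infty\le\sum_\xi|\widehat{U_{q_j}}(\xi)|\lesssim\lq^{2\a-4}\cdot\lq^{3}=\lq^{\,2\a-1}$, each box containing $\lesssim\lq^3$ lattice points.

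The main obstacle is the range $1<r<2$. There the naive bound $\|U_{q_j}\|_r\le\|U_{q_j}\|_2$ from H\"older on the torus is too lossy, producing the spurious factor $\lq^{\,3/r-3/2}>0$; one genuinely needs the Dirichlet-kernel structure of the blocks, equivalently the cancellation encoded jointly in the $L^r$ boundedness of $p(\n)$ and the sharp $\|D_N\|_r\lesssim N^{1-1/r}$ estimate. This same structure explains why $r=1$ must be excluded, the Dirichlet kernel then carrying a logarithmic loss and $p(\n)$ failing to be bounded.
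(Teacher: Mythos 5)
Your proof is correct and follows essentially the same route as the paper's: bound each block via the $L^r$-boundedness of the Leray projection, factor the characteristic function of each lattice box into a product of one-dimensional Dirichlet kernels, apply $\|D_N\|_r\lesssim N^{1-1/r}$ for $1<r<\infty$, and handle $r=\infty$ by the triangle inequality on the Fourier side. Your additional remarks (the reduction via spectral disjointness of the shells, why $C_j=A_j+B_j$ is again a box, and why $r=1$ must be excluded) are accurate elaborations of points the paper leaves implicit.
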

\begin{proof}
We give the estimate only for one block, the other ones being similar. Using boundedness of the Leray-Hopf projection, we have, for all $1<r<\infty$,
$$
| \lq^{2\a-4} (\ey(\cdot) \chi_{A_j})^{\vee} |_r \lesssim \lq^{2\a-4} | (\chi_{A_j})^\vee |_r.
$$
Notice that by construction,
\[
| (\chi_{A_j})^\vee(x_1,x_2,x_3) | = | D_{(c+1)\lq } (x_1) D_{c\lq}(x_2) D_{c\lq}(x_3) |.
\]
where $D_N$ denotes the Dirichlet kernel. Hence,
\[
| (\chi_{A_j})^\vee |_r \leq |D_{(c+1)\lq }|_r | D_{c\lq} |_r^2.
\]
By a well-known estimate, we have $|D_N|_r \leq N^{1-\frac{1}{r}}$ (c.f. \cite{G}). Putting the above estimates together implies the desired inclusion in $B^{1+3/r-2\a}_{r,\infty}$. In the case $r = \infty$ we simply use the triangle inequality to obtain
$$
|U_{q_j}|_\infty \lesssim \lq^{2\a-1}.
$$
\end{proof}
Let us now examine the trilinear term. We will use the following notation for convenience
\begin{equation}
u\otimes v : \n w = \int_{\T^3} v_i \p_i w_j u_j dx.
\end{equation}

Using the antisymmetry we obtain
\begin{align*}
\tri{U}{U}{U_{q_j}}& = \sum_{k \geq j+1} \tri{\tilde{U}_{q_k}}{\tilde{U}_{q_k}}{U_{q_j}} +
\tri{\tilde{U}_{q_j}}{\tilde{U}_{q_j}}{U_{q_j}} \\
&+ \tri{U_{\leq q_{j-1}}}{\tilde{U}_{q_j}}{U_{q_j}} + \tri{\tilde{U}_{q_j}}{U_{\leq q_{j-1}}}{U_{q_j}} \\
&= \sum_{k \geq j+1} \tri{\tilde{U}_{q_k}}{\tilde{U}_{q_k}}{U_{q_j}} + \tri{U_{q_j - 1}}{U_{q_j}}{U_{q_j}} \\
&-
\tri{U_{q_j}}{U_{q_j}}{U_{\leq q_{j-1}}} \\
& = A+B+C.
\end{align*}
Using Bernstein's inequalities we estimate
\begin{align*}
|A| & \lesssim  \lq |U_{q_j}|_\infty \sum_{k \geq j+1} | \tilde{U}_{q_k} |_2^2 \lesssim \lq^{2\a}\l_{q_{j+1}}^{4\a-5} \leq 1, \\
|C| & \lesssim |U_{q_j}|_2^2 \sum_{k \leq j-1} \lqk | \tilde{U}_{q_k} |_\infty \lesssim \l_{q_{j-1}}^{2\a}\lq^{4\a-5} \leq 1,
\end{align*}
where in the latter inequality we used the fact $|U_{q_j}|_2 \sim \lq^{2\a-5/2}$. On the other hand, a straightforward computation shows that
\begin{equation}
B \sim \lq^{6\a-5},
\end{equation}
which is thus the dominant term of the three, and hence,  
$$
\tri{U}{U}{U_{q_j}} \sim \lq^{6\a - 5}.
$$
\begin{proposition}\label{p-nse}
Let $u\in C_w([0,T); L^2) \cap L^2([0,T); H^1)$ be a weak solution to the NSE with initial condition $u(0) = U$. Then there is $\delta=\delta(u) >0$ such that
\begin{equation}
\limsup_{t \ra 0+} \|u(t) - U\|_{B^{1 - 2\a}_{\infty,\infty}} \geq \delta.
\end{equation}
If, in addition, $u$ is a Leray-Hopf solution satisfying the energy inequality \eqref{enineq}, then $\delta$ can be chosen independent of $u$.
\end{proposition}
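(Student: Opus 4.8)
The plan is to follow, for each index $j$, the single scalar observable $g_j(t)=\<u(t),U_{q_j}\>$ and to show that it is forced away from its initial value $g_j(0)=|U_{q_j}|_2^2$ by the resonant trilinear interaction. Testing the weak formulation of \eqref{nse} against the \emph{time-independent}, divergence-free field $U_{q_j}$ kills the pressure and yields the identity
\[
b_j(t):=\<u(t)-U,\,U_{q_j}\>=\int_0^t \tri{u}{u}{U_{q_j}}\,ds-\nu\int_0^t\<|\n|^\a u,|\n|^\a U_{q_j}\>\,ds,
\]
where I have used that the Fourier supports of the blocks are disjoint, so that $\<U,U_{q_j}\>=|U_{q_j}|_2^2$. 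Everything then reduces to a quantitative lower bound for $|b_j(t)|$ at a scale $q_j$ chosen in terms of $t$.

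Next I would convert such a bound into a bound on the critical norm by duality. Since $U_{q_j}$ is supported at frequencies comparable to $\lq$, identity \eqref{extended} gives $b_j(t)=\<\tilde w_{q_j},U_{q_j}\>$ with $w=u-U$, hence $|b_j(t)|\le|\tilde w_{q_j}|_\infty\,|U_{q_j}|_1$. Running the Dirichlet-kernel computation of Lemma~\ref{ubesov} at $r=1$ (which only costs a harmless factor $(\log\lq)^3$) gives $|U_{q_j}|_1\lesssim\lq^{2\a-4}(\log\lq)^3$, while $|\tilde w_{q_j}|_\infty\lesssim\lq^{2\a-1}\|w\|_{B^{1-2\a}_{\infty,\infty}}$ by definition of the norm. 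Therefore
\[
\|u(t)-U\|_{B^{1-2\a}_{\infty,\infty}}\gtrsim\frac{|b_j(t)|}{\lq^{4\a-5}(\log\lq)^3},
\]
and it suffices to produce $|b_j(t)|\gtrsim t\,\lq^{6\a-5}$ for a suitable $j=j(t)$.

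For the lower bound on $b_j(t)$ the engine is the resonant term $\tri{U}{U}{U_{q_j}}\sim\lq^{6\a-5}$ already computed above. Writing $u=U+w$, I would split the trilinear error $\tri{u}{u}{U_{q_j}}-\tri{U}{U}{U_{q_j}}$ into a part linear and a part quadratic in $w$. The dissipation term is handled by moving a derivative and invoking \eqref{enineq}: it is $\lesssim t^{1/2}\lq^{3\a-5/2}$, which is $o(t\lq^{6\a-5})$ as $j\to\infty$ since $\a\ge1$. In the quadratic error I would localize $\n U_{q_j}$ to frequencies $\sim\lq$, estimate by Bernstein's $L^2\to L^4$ inequality, and sum using the energy bound $\sum_m\l_{q_m}^{2\a}\int_0^t|u_{q_m}|_2^2\,ds\lesssim\nu^{-1}|U|_2^2$; the high–high contribution then sums to $O(1)$ thanks to $\sum_{m\ge j}\l_{q_m}^{3/2-2\a}<\infty$ (here $\a\ge1$ is essential), hence is again $o(t\lq^{6\a-5})$ for large $j$.

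The main obstacle is the part of the error that is \emph{linear} in $w$: once localized to frequencies $\sim\lq$ it scales like $\lq^{6\a-5}$, exactly the order of the resonant term, so it cannot be absorbed by the energy estimate alone. I would control it using the hypothesis itself, through a continuity/contradiction argument. Assuming $\limsup_{t\to0}\|u(t)-U\|_{B^{1-2\a}_{\infty,\infty}}$ vanishes (respectively is smaller than a fixed $\eta_0$), there is $t_0$ with $\sup_{s<t_0}\|u(s)-U\|_{B^{1-2\a}_{\infty,\infty}}\le\eta$ for $\eta$ small; bounding $\tilde w_{q_j}$ in $L^\infty$ by $\eta\lq^{2\a-1}$ then makes the linear error $\lesssim\eta\,\lq^{6\a-5}$. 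Choosing $\eta$ small gives $b_j(t)\ge\tfrac12 c_0\,t\lq^{6\a-5}$, and for each fixed small $t$, letting $j\to\infty$ (which annihilates the dissipation and quadratic errors) forces $\|u(t)-U\|_{B^{1-2\a}_{\infty,\infty}}\gtrsim t\lq^{2\a}/(\log\lq)^3\to\infty$, a contradiction. This produces some $\delta=\delta(u)>0$ in general; in the Leray–Hopf case every constant entering the estimates is universal through \eqref{enineq}, so $\delta$ can be chosen independently of $u$. The delicate balance is thus that Besov smallness is indispensable precisely for the linear error, whereas the quadratic high–high interactions, which carry no such smallness, are tamed only by the summability afforded by $\a\ge1$.
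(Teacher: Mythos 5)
Your overall strategy---isolate the resonant term $\tri{U}{U}{U_{q_j}}\sim \l_{q_j}^{6\a-5}$, assume Besov smallness of $w=u-U$ near $t=0$, and derive a contradiction---is the same as the paper's, but the mechanics differ: you test against the fixed field $U_{q_j}$ and convert a lower bound on $\langle w(t),U_{q_j}\rangle$ into a lower bound on the Besov norm by duality, whereas the paper tests against $u_{q_j}$ and contradicts finiteness of the energy $|u(t_0)|_2$. That route is legitimate in principle (one minor repair: the Leray projector is not bounded on $L^1$, so Lemma~\ref{ubesov} does not literally run at $r=1$; use $|U_{q_j}|_1\lesssim |U_{q_j}|_r$ with $r>1$ close to $1$ instead of the $(\log\l_{q_j})^3$ bound). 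The problem lies in the error term that your particular decomposition creates.

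The genuine gap is the quadratic error $\tri{w}{w}{U_{q_j}}$. You bound its high--high part by Bernstein $L^2\to L^4$ together with the bound $\sum_m\l_{q_m}^{2\a}\int_0^t|u_{q_m}|_2^2\,ds\lesssim \nu^{-1}|U|_2^2$. But that bound controls $u$, not $w$: since $||\n|^\a U|_2^2\sim\sum_k\l_{q_k}^{6\a-5}=\infty$ for $\a\ge 1$, the field $w=u-U$ has \emph{infinite} enstrophy, so your summation does not apply to it. Concretely, writing $w_p=u_p-U_p$ inside the high--high sum, the $U$--$U$ contributions are of size $\l_{q_j}^{2\a}\sum_{k\ge j}\l_{q_k}^{4\a-5}$: the $k=j$ term is $\l_{q_j}^{6\a-5}$, exactly the order of the signal (it can be rescued only by pairing the $\d$-smallness in $L^\infty$ against $L^2$ bounds on the diagonal), and the $k>j$ tail is controlled only by the lacunarity condition $\l_{q_i}^{2\a}\l_{q_{i+1}}^{4\a-5}<1$---a condition you never invoke anywhere in the argument (your alternative pairing, smallness times $|U_{q_k}|_2$, produces $\sum_{k>j}\l_{q_k}^{4\a-7/2}$, which diverges for $\a\ge 1$). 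So the step ``the high--high contribution sums to $O(1)$'' fails as written; it is repairable, but only by splitting $w=u-U$ inside the quadratic term, estimating the $u$--$u$, $u$--$U$, $U$--$U$ pieces with three different pairings, and using the lacunarity of $\{q_j\}$. This is precisely what the paper's telescoping decomposition $\tri{w}{U}{U_{q_j}}+\tri{u}{w}{U_{q_j}}+\tri{u}{u}{w_{q_j}}$ is designed to avoid: each term carries exactly one factor of $w$, and the remaining slots hold $u$ or $U$ kept intact, so enstrophy bounds on $u$ and the explicit bounds on $U$ always suffice and no product of two $w$'s ever needs to be estimated. Note finally that you single out the linear-in-$w$ terms as ``the main obstacle''; those are in fact fine (they correspond to the paper's $A$ and to part of $B$), while the quadratic term you wave off is where your argument breaks.
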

\begin{proof}
Let us test \eqref{nse} with $u_{q_j}$. Using \eqref{extended}, we find
$$
\p_t( \tilde{u}_{q_j} \cdot u_{q_j}) = - \nu |\n|^\a \tilde{u}_{q_j} \cdot |\n|^\a u_{q_j} + \tri{u}{u}{u_{q_j}},
$$
where as defined before, $\tilde{u}_{q_j} = u_{q_j-1}+u_{q_j}+u_{q_j+1}$.
Denoting $E(t) = \int_0^t | |\n|^\a u|_2^{2} ds$ we obtain
\begin{multline}\label{crucial}
|\tilde{u}_{q_j}(t)|_2^2 \geq |U_{q_j}|_2^2 - \nu E(t) + c_1 \lq^{6\a-5} t \\
- c_2\int_0^t \left|  \tri{u}{u}{u_{q_j}} - \tri{U}{U}{U_{q_j}}  \right|_1 ds,
\end{multline}
for some positive constants $c_1$ and $c_2$.
We now show that if the conclusion of the proposition fails then for some small $t>0$ the integral term the growth of the integral term above becomes less than  $c_1\lq^{6\a-5} t$ for large $j$.
This forces $|\tilde{u}_{q_j}(t)|_2^2  \gtrsim \lq^{6\a-5} t$ for all large $j$. Hence $u$ has infinite energy, which is a contradiction.

So suppose that for every $\delta >0$ there exists $t_0 = t_0(\delta) >0$ such that $\|u(t)- U\|_{B^{1- 2 \a}_{\infty,\infty}} < \d$ for all $0<t\leq t_0$.
Denoting $w = u-U$ we write
\begin{multline*}
\tri{u}{u}{u_{q_j}} - \tri{U}{U}{U_{q_j}} = \tri{w}{U}{U_{q_j}} + \tri{u}{w}{U_{q_j}} \\ + \tri{u}{u}{w_{q_j}} = A + B+ C.
\end{multline*}
We will now decompose each triplet into three terms according to the type of interaction (c.f. Bony \cite{bony}) and estimate each of them separately.
\begin{multline*}
A = \sum_{\substack{p',p'' \geq q_j \\ |p' - p''| \leq 2}} \tri{w_{p'}}{U_{p''}}{U_{q_j}} + \tri{w_{\leq q_j}}{\tilde{U}_{q_{j}}}{ U_{q_j}} \\
+ \tri{\tilde{w}_{q_j}}{U_{\leq q_j}}{U_{q_j}} - \text{repeated} = A_1 + A_2 + A_3.
\end{multline*}
Let us fix $r\in(1,3/(4\alpha-2))$ and use Lemma~\ref{ubesov} along with H\"{o}lder and Bernstein's inequalities to estimate $A_1$:
\[
\begin{split}
| A_1 | \leq |\n U_{q_j}|_{r'} \sum |w_{p'}|_\infty |U_{p''}|_r \lesssim \lq^{2\a-3+\frac{3}{r}} \sum |w_{p'}|_\infty \l_{p''}^{2\a-1 - \frac{3}{r}} \\
\lesssim  \d \lq^{2\a-3+\frac{3}{r}} \leq \d \lq^{6\a - 5}.
\end{split}
\]
Intergrating by parts we obtain $A_2 = \tri{U_{q_j}}{\tilde{U}_{q_{j}}}{w_{\leq q_j} }$. Thus,  using the same tools,
\[
|A_2| \leq |\tilde{U}_{q_j}|_2^2 |\n w_{\leq q_j}|_\infty \lesssim
\lq^{4\a-5} \sum_{p\leq q_j} \l_p |w_p|_\infty < \d \lq^{6\a-5}.
\]
And finally,
\[
|A_3|  \leq \lq |U_{\leq q_j}|_2 |U_{q_j}|_{2} |\tilde{w}_{q_j}|_\infty \lesssim  \lq^{4\a-4}|\tilde{w}_{q_j}|_\infty < \d \lq^{6\a-5}.
\]
We have shown the following estimate:
\begin{equation}\label{A}
|A| \lesssim \d \lq^{6\a-5}.
\end{equation}
As to $B$ we decompose analogously,
\begin{multline*}
B = \sum_{\substack{p',p'' \geq q_j \\ |p' - p''| \leq 2}} \tri{u_{p'}}{w_{p''}}{U_{q_j}} + \tri{u_{\leq q_j}}{\tilde{w}_{q_{j}}}{ U_{q_j}} \\
+ \tri{\tilde{u}_{q_j}}{w_{\leq q_j}}{U_{q_j}} - \text{repeated} = B_1+B_2+B_3.
\end{multline*}
The term $B$ is the least problematic. Here we do not even have to use
the smallness of $w$ and can just roughly estimate it in terms of the enstrophy
$||\n|^\a u|_2^2$. We have
\[
\begin{split}
|B_1| & \lesssim \sum_{\substack{p',p'' \geq q_j \\ |p' - p''| \leq 2}} | \tri{u_{p'}}{u_{p''}}{U_{q_j}} | +   \sum_{\substack{p',p'' \geq q_j \\ |p' - p''| \leq 2}}  | \tri{u_{p'}}{U_{p''}}{U_{q_j}} | \\
&\leq \lq^{2\a } | u_{ \geq q_j } |_2^2 + \lq^{2\a } |u_{\geq q_j}|_2 |U_{\geq q_j} |_2
\\&\leq  | |\n|^\a u_{ \geq q_j } |_2^2 + \lq^{3\a - 5/2 } | |\n|^\a u_{\geq q_j}|_2 \\
&\leq  | |\n|^\a u_{ \geq q_j } |_2^2 + \lq^{6\a - 5 - 1/2 } | |\n|^\a u_{\geq q_j}|_2.
\end{split}
\]
Again, using Lemma~\ref{ubesov}, Bernstein and H\"{o}lder inequalities we obtain
\begin{align*}
|B_2| & =  \left| \tri{U_{q_j}}{\tilde{w}_{q_{j}}}{u_{\leq q_j}} \right| \leq  |U_{q_j}|_{2} |\tilde{w}_{q_{j}}|_\infty |\n u_{\leq q_j}|_2 \\
    & \leq \lq^{2\a-5/2} |\tilde{w}_{q_{j}}|_\infty ||\n|^{\a} u|_2 \leq \lq^{4\a-7/2} ||\n|^{\a} u|_2\leq  \lq^{6\a-5-1/2} ||\n|^{\a} u|_2.\\
|B_3| & \leq |\tilde{u}_{q_j}|_2 |w_{\leq q_j}|_\infty |\n U_{q_j}|_2 \lesssim \lq^{2\a-3/2} |\tilde{u}_{q_j}|_2  \sum_{p \leq q_j}  |w_p|_\infty \\
&\lesssim  \lq^{3\a-5/2} ||\n|^{\a} u|_2 \leq \lq^{6\a-5 - 1/2} ||\n|^{\a} u|_2 .
\end{align*}
We thus obtain
\begin{equation}\label{B}
|B| \lesssim | |\n|^\a u_{ \geq q_j } |_2^2 + \lq^{6\a - 5 - 1/2 } | |\n|^\a u|_2.
\end{equation}
Continuing in a similar fashion we write
\begin{multline*}
C = \sum_{\substack{p',p'' \geq q_j \\ |p' - p''| \leq 2}} \tri{u_{p'}}{u_{p''}}{w_{q_j}} + \tri{u_{\leq q_j}}{\tilde{u}_{q_{j}}}{w_{q_j}} \\
+ \tri{\tilde{u}_{q_j}}{u_{\leq q_j}}{w_{q_j}} - \text{repeated} = C_1+C_2+C_3.
\end{multline*}
We have
\[
|C_1|  \leq |\n w_{q_j}|_\infty | u_{\geq q_j} |_2^2 \lesssim \d | |\n|^\a u|_2^2.
\]
In $C_2$ we move the derivative onto $u_{\leq q_j}$ and estimate as usual,
\[
|C_2|  \leq | \n u|_{2} |\tilde{u}_{q_{j}}|_2 |w_{q_j}|_\infty \lesssim  | |\n|^\a u |_{2} |\tilde{u}_{q_{j}}|_2  \lq^{2\a - 1} \leq   | |\n|^\a u |^2_{2} \lq^{6\a - 5 - 1/2}.
\]
Using a uniform bound on the energy we have for $C_3$,
\[
|C_3| \lesssim \lq |w_{q_j}|_\infty |\tilde{u}_{q_j}|_2 \leq  \d \lq^{\a} | |\n|^\a \tilde{u}_{q_j}|_2 \leq \d \lq^{6\a-5} | |\n|^\a \tilde{u}_{q_j}|_2.
\]
Thus,
\begin{equation}\label{C}
|C| \lesssim \d | |\n|^\a u|_2^2 +  | |\n|^\a u |^2_{2} \lq^{6\a - 5 - 1/2} + \d \lq^{6\a-5} | |\n|^\a \tilde{u}_{q_j}|_2.
\end{equation}
Now combining estimates \eqref{A}, \eqref{B}, \eqref{C} along with the boundedness of $E(t_0)$ we obtain
\begin{equation} \label{nonlinearterm}
\begin{split}
\int_0^{t_0} \left| A+B+C \right| ds & \lesssim \d \lq^{6\a-5} t_0 +  \int_0^{t_0} | |\n|^\a u_{ \geq q_j } |_2^2\, ds + E(t_0)^{1/2} t_0^{1/2} \lq^{6\a - 5 - 1/2 } \\
& + \d E(t_0) +  \d \lq^{6\a-5} \int_0^{t_0} | |\n|^\a \tilde{u}_{q_j}|_2\, ds.
\end{split}
\end{equation}
And for large $j$, and fixed $t_0$, this gives
\[
\int_0^{t_0} \left| A+B+C \right| ds \lesssim \d \lq^{6\a-5} t_0 + \frac{\nu}{2} E(t_0).
\]
Pugging this back into \eqref{crucial} gives the estimate
$$
|\tilde{u}_{q_j}(t_0)|_2^2 \gtrsim \lq^{6\a - 5},
$$
for all $j >j_0$, which shows that $u(t_0)$ has infinite energy, a contradiction.

The last statement of the proposition follows from the fact that we have the bounds on $|u(t)|_2 \leq |U|_2$ and $E(t_0) \leq (2\nu)^{-1} |U|_2^2$ which remove dependence of the constants on $u$.
\end{proof}


\end{document}